\newtheorem{thm}{Theorem}
\newtheorem{lem}[thm]{Lemma}
\theoremstyle{definition}
\newtheorem{clm}{Claim}[thm]
\newenvironment{poc}{\begin{proof}[Proof of claim]}{\end{proof}}
\newcommand*{\abs}[1]{\lvert#1\rvert}
\newcommand{\res}{\operatorname{res}}
\newcommand{\intr}{\operatorname{int}}
\newcommand{\ext}{\operatorname{ext}}
\newcommand{\ie}{i.e.\ }
\newcommand{\nxy}{\mathcal N_{X,Y}}
\newcommand{\iij}{\mathbb{I}_{i_X<j_X}}
\title{The path minimises the average size of a connected induced subgraph}
\author{John Haslegrave}
\affil{Mathematics Institute, University of Warwick, UK}
\date{}
\begin{document}
\maketitle
\begin{abstract}We prove that among connected graphs of order $n$, the path uniquely minimises the average order of its connected induced subgraphs. This confirms a conjecture of Kroeker, Mol and Oellermann, and generalises a classical result of Jamison for trees, as well as giving a new, shorter proof of the latter.

A different proof of the main result was given independently and almost simultaneously by Andrew Vince; the two preprints were submitted one day apart.

\noindent\textbf{Keywords:} connected graph; extremal graph theory; connected induced subgraph; average graph parameter.
\end{abstract}
\section{Introduction}
Connectedness is perhaps the most fundamental property of a network, and if nodes of a network may fail then the robust parts of the network, which remain connected even if all other nodes fail, are of particular interest. In particular, we might ask what a typical such part looks like. 

Such questions have a long history when the network is a tree, that is, a minimally connected graph. Jamison \cite{Jam83} studied the average order of a connected subgraph of a fixed tree of given order, that is, the average order of a subtree, showing that this invariant is minimised for the path, where it is just over one third of the total number of vertices, but at the other extreme the average proportion of vertices in a subtree can be arbitrarily close to $1$. Meir and Moon \cite{MM83} gave asymptotic results on the average value over all trees of a given order.

Subsequent work has considered special classes of trees, such as the series-reduced trees, that is, the trees with no vertex of degree $2$. Series-reduced trees arise naturally by taking the smallest element from each class of topologically equivalent trees. Jamison \cite{Jam83} conjectured that for these trees the average order of a subtree is at least half that of the original tree. This was confirmed by Vince and Wang \cite{VW10}, who also gave an upper bound of three quarters. The present author \cite{Has14} classified the sequences of trees which approach either bound.

There are two plausible ways to generalise these extremal questions to the case where the underlying graph, $G$, is not necessarily a tree. (We will always assume $G$ to be connected.) One is to ask about the average order of subtrees, that is, subgraphs which are trees; see e.g.~\cite{CGMV}. The other, perhaps more natural, is to ask about the average order of a connected induced subgraph, as alluded to above. We say a nonempty set of vertices of $G$ is a \textit{connected set} if it is the vertex set of a connected induced subgraph (here we do not consider the empty set to be a connected set). The study of the average size of a connected induced subgraph of a graph was initiated by Kroeker, Mol and Oellermann \cite{KMO18}, and further developed by Vince \cite{Vin21}. We remark that the profile of the connected induced subgraphs of $G$ will typically be very different to that of its subtrees, since every connected induced subgraph corresponds to at least one subtree on the same set of vertices, but typically larger subgraphs correspond to a greater number, biassing the average order of a subtree upwards.

Kroeker, Mol and Oellermann \cite{KMO18} conjectured that the average order of a connected induced subgraph for a graph $G$ is minimised, among connected graphs of given order, when $G$ is a path. In the case that $G$ is a tree, the average order of a connected induced subgraph is precisely the average order of a subtree, and so Jamison's result shows that the path is minimal among trees. Kroeker, Mol and Oellermann determined the minimal graph among cographs of order $n$ (which is not the path for $n\geq 4$, since it is not a cograph); subsequently, together with Balodis, they showed that the path is minimal among block graphs \cite{BMOK}.

In what follows, we write $N(G)$ for the number of connected sets of $G$, and $A(G)$ for their average order. We also use a local analogue: $N(G;v)$ denotes the number of connected sets of $G$ which contain $v$, and $A(G;v)$ denotes their average order. To avoid confusion with this notation, we use $\Gamma(v)$ for the neighbourhood of a vertex. 

Our main result confirms the above conjecture. It also gives a new, self-contained, shorter proof of Jamison's classical result for trees \cite{Jam83}.
\begin{thm}\label{main}Let $G$ be a connected graph of order $n$. Then $A(G)\geq(n+2)/3$, with equality if and only if $G$ is a path.\end{thm}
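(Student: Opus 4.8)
The plan is to bound $A(G)$ below by induction on $n$, deleting one non-separating vertex at each step. Fix a vertex $v$. A connected set of $G$ either misses $v$ --- and is then precisely a connected set of $G-v$ --- or contains $v$, so $N(G)=N(G-v)+N(G;v)$ and, summing orders, $A(G)N(G)=A(G-v)N(G-v)+A(G;v)N(G;v)$. Every connected graph on at least two vertices has a non-separating vertex (a leaf of a spanning tree will do), so we may take $v$ with $G-v$ connected; the inductive hypothesis then gives $A(G-v)\geq(n+1)/3$, with equality only if $G-v$ is a path. Substituting into the identity above and rearranging, the inductive step reduces to the \emph{local inequality}
\begin{equation}\label{local}
3\sum_{S\ni v}\abs S\;\geq\;(n+1)\,N(G;v)+N(G),
\end{equation}
the sum being over connected sets $S$ containing $v$; equivalently $A(G;v)\geq(n+1)/3+N(G)/\bigl(3N(G;v)\bigr)$, or, with $N(G)=N(G;v)+N(G-v)$ and $N(G;v,w)$ the number of connected sets containing both $v$ and $w$, $3\sum_{w\neq v}N(G;v,w)\geq(n-1)N(G;v)+N(G-v)$. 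For $G$ a path and $v$ an end all of these hold with equality, which is what the equality clause of the theorem requires; the evaluation $A(P_n)=(n+2)/3$ is itself a short computation.

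The crux is~\eqref{local}, for a suitably chosen non-separating $v$ of an arbitrary connected graph, together with a description of the equality case. I would attack it by analysing connected sets through $v$ via their restriction to $G-v$: for such a set $S$, the components $T_1,\dots,T_k$ of $G[S\setminus\{v\}]$ are connected sets of $G-v$, each adjacent to $v$ and pairwise non-adjacent in $G-v$, and conversely any such family yields a connected set $\{v\}\cup T_1\cup\dots\cup T_k$ of $G$. When $v$ is a leaf, with unique neighbour $u$, this is a bijection between the connected sets through $v$ and $\{\emptyset\}$ together with the connected sets of $G-v$ containing $u$, and~\eqref{local} collapses to an inequality about the pair $(G-v,u)$ that a further induction can handle; so one first disposes of graphs with a leaf. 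The hard case is when $G$ has minimum degree at least $2$ (more generally, when $G$ is far from a tree): there the correspondence carries an ``independence'' constraint among the branches $T_i$, and~\eqref{local} must be extracted from simultaneous control of $N(G;v)$ and $\sum_{S\ni v}\abs S$. My approach would be to split the connected sets through $v$ according to how they meet a small separating set --- an ``interior'' part near $v$, an ``exterior'' part, and a ``residual'' part on the separator --- and push~\eqref{local} through this decomposition, choosing $v$ peripherally enough to keep the bookkeeping finite.

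For equality: unwinding the induction, $A(G)=(n+2)/3$ forces equality in the inductive hypothesis (so $G-v$ is a path) and equality in~\eqref{local} at the vertex removed at each stage; the substantive fact, part of the tightness analysis of~\eqref{local}, is that these co-occur only when $v$ is an end of the path $G-v$ and $G$ is itself a path --- equivalently, \eqref{local} is strict at every non-separating vertex of every connected non-path graph --- which closes the induction. The single step I expect to be the real obstacle is~\eqref{local} itself: its proof in the leafless case, and, bound up with it, the proof of strictness away from the path.
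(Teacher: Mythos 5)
Your inductive framework is exactly the paper's, and your reduction to the local inequality $A(G;v)\geq(n+1)/3+N(G)/(3N(G;v))$ for some non-cutvertex $v$ (with strictness unless $G$ is a path) is correct: that is precisely what is needed to close the induction, and your equivalent reformulations and the remarks about equality are all accurate. But the proposal stops there. You acknowledge you have not proved this local inequality, only sketched a direction (splitting the connected sets through $v$ by a small separator into interior, exterior and residual parts near a ``peripheral'' $v$), and nothing in the proposal supplies the actual argument in the hard, leafless case. Since the local inequality subsumes essentially all the content of the theorem, this is a genuine gap, not a detail of bookkeeping.

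The missing idea is to \emph{decouple} the local inequality into two independently provable statements, one about the local average and one about the local count. The paper's Lemma~\ref{local} shows $A(G;v)\geq(n+1)/2$ for \emph{every} vertex $v$, via a self-contained induction on an auxiliary notion of $(S,H)$-connected sets obtained by deleting $v$ and tracking which components of the remainder are anchored. The paper's Lemma~\ref{tops} shows there \emph{exists} a non-cutvertex $v$ with $N(G;v)\geq\frac{2}{n+1}N(G)$, strictly unless $G$ is a path; this is the hard half, proved by fixing a diametral path $P=v_0\cdots v_\ell$, building a two-coloured directed auxiliary graph on the connected sets of $G$ (red edges steer each set towards $v_0$, blue towards $v_\ell$), observing that the red and blue edge-sets each form a union of directed paths whose number equals $N(G;v_0)$ and $N(G;v_\ell)$ respectively, and then bounding the average path length strictly below $(n-1)/2$ by a case analysis of the path-starts into high, normal and low tops. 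Chaining the two lemmas gives $A(G;v)\geq(n+1)/2=(n+1)/3+(n+1)/6\geq(n+1)/3+N(G)/(3N(G;v))$, which is exactly your local inequality, with the second inequality strict off the path. Your instinct to choose $v$ peripherally is sound, and Lemma~\ref{tops} does indeed work from the endpoints of a diametral path, but the separating-set decomposition you propose is not the mechanism that makes it go; you would still need to discover both the existence lemma and the uniform local-average bound to carry the induction.
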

While this proof was being written up, a different proof of the main result was independently obtained by Vince \cite{Vnew}.
\section{Proof}
The proof requires two ingredients. The first, and simpler, is a bound on the local average size.
\begin{lem}\label{local}For any connected graph $G$ and any vertex $v$, $A(G;v)\geq(\abs{G}+1)/2$.\end{lem}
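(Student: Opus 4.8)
The plan is to induct on $n=\abs G$; the case $n=1$ is trivial, so let $n\ge 2$ and split on whether $v$ is a cut vertex. If $v$ \emph{is} a cut vertex, let $H_1,\dots,H_k$ ($k\ge 2$) be the components of $G-v$. The point is that a connected set containing $v$ is the same data as a choice, independently for each $i$, of a set $D_i\subseteq V(H_i)$ each component of which (in $H_i$) has a neighbour of $v$; equivalently, writing $H_i^+$ for $H_i$ with an extra vertex $r_i$ joined to $\Gamma(v)\cap V(H_i)$, a choice for each $i$ of a connected set of $H_i^+$ containing $r_i$, with its $r_i$ deleted. Since the count is a product, $\sum_{S\ni v}x^{\abs S}=x\prod_i\bigl(\sum_{D_i}x^{\abs{D_i}}\bigr)$, and taking the logarithmic derivative at $x=1$ gives $A(G;v)=1+\sum_i\bigl(A(H_i^+;r_i)-1\bigr)$. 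As each $H_i^+$ is connected with $\abs{H_i^+}=\abs{H_i}+1\le n-1$, the inductive hypothesis yields $A(H_i^+;r_i)\ge(\abs{H_i}+2)/2$, and summing gives $A(G;v)\ge1+\tfrac12\sum_i\abs{H_i}=(n+1)/2$.

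If $v$ is \emph{not} a cut vertex, choose a non-cut vertex $u\ne v$ (at least two non-cut vertices always exist), so that $G-u$ is connected, and partition the connected sets containing $v$ into those missing $u$ and those containing $u$. The first are precisely the connected sets of $G-u$ containing $v$, so by the inductive hypothesis their average order is at least $n/2$. The second are the connected sets containing both $u$ and $v$; for these I would strengthen the induction to prove that the average order of a connected set containing two given vertices $x,y$ is at least $(n+\mathrm{dist}(x,y)+1)/2$ — this is tight on a path, and the present lemma is the case $x=y$. Granting it, the two parts of the partition combine to give $A(G;v)\ge(n+1)/2$ as soon as $\mathrm{dist}(u,v)\cdot N(G;u,v)\ge N(G-u;v)$, where $N(G;u,v)$ counts connected sets containing both $u$ and $v$; the simplest instance of this inequality comes from the injection $S\mapsto S\cup\{u\}$ of the connected sets of $G-u$ containing $v$ into the connected sets containing $u$ and $v$, which is legitimate whenever $uv$ is an edge.

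I expect essentially all the difficulty to sit in this non-cut case. One has to run the strengthened statement — which after one vertex deletion wants to track two distinguished vertices, and apparently three after another, and so on — inside a genuinely well-founded induction, and one has to pick the deleted vertex $u$ so that the counting inequality $\mathrm{dist}(u,v)N(G;u,v)\ge N(G-u;v)$ holds; the configuration to keep an eye on is when every neighbour of $v$ is itself a cut vertex, as when $v$ is a leaf of a star. (A tempting shortcut — pairing each connected $S\ni v$ with $(V(G)\setminus S)\cup\{v\}$ whenever the latter is connected, which disposes of such sets with the exact average $(n+1)/2$ — only reduces the lemma to the claim that the remaining ``bad'' sets, for which $(V(G)\setminus S)\cup\{v\}$ is disconnected, do not drag the average below $(n+1)/2$, and controlling those seems to need its own induction, so I do not expect it to avoid the work above.)
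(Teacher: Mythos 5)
Your cut-vertex case is correct and complete: the product decomposition of connected sets containing $v$ over the components $H_i$ of $G-v$, the reformulation via the rooted graphs $H_i^+$, and the logarithmic-derivative identity $A(G;v)=1+\sum_i(A(H_i^+;r_i)-1)$ all check out, and the induction is well-founded there since $\abs{H_i^+}\leq n-1$ when $k\geq 2$. But the non-cut-vertex case is a genuine gap, not a deferrable detail: for a $2$-connected graph the cut-vertex branch never fires, so the entire proof rests on the part you have only sketched. Three things are missing there. First, the strengthened statement (average order of connected sets containing two prescribed vertices $x,y$ is at least $(n+\mathrm{dist}(x,y)+1)/2$) is unproven, and as you yourself observe it does not close under your induction — deleting a vertex wants to turn a two-vertex condition into a three-vertex one, so there is no well-founded induction as stated. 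Second, the counting inequality $\mathrm{dist}(u,v)\,N(G;u,v)\geq N(G-u;v)$ is only justified when $uv\in E(G)$, yet you also need $G-u$ connected, and these requirements can be incompatible (if $G$ is a path and $v$ an endvertex, the unique neighbour of $v$ is a cut vertex, so the only admissible $u$ is the far endvertex, where your injection does not apply). Third, even granting both ingredients, nothing is said about how to choose $u$ in general.

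For comparison: the paper proves a single more general claim — for any graph $H$ and any set $S$ meeting every component of $H$, the subsets $U$ (including $\varnothing$) all of whose components meet $S$ have average size at least $\abs{H}/2$ — by induction on $\abs{H}$, peeling off one vertex $x\in S$ at a time and using shortest-path injections to lower-bound the ratio of such sets containing $x$ to those not containing it. Applying this with $H=G-v$ and $S=\Gamma(v)$ gives the lemma in one stroke. Note that this claim, specialised to $S=\{r_i\}$ inside $H_i^+$, is essentially what your cut-vertex argument reconstructs; the point of the paper's formulation is that allowing $\abs{S}>1$ and disconnected $H$ is exactly what absorbs the case that defeats your induction. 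If you want to salvage your approach, proving that general claim is the missing ingredient.
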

\begin{proof}Let $H$ be a (not necessarily connected) graph, and let $S$ be a set of vertices including at least one vertex from every component of $H$. Define a subset $U$ of vertices to be \textit{$(S,H)$-connected} if either $U=\varnothing$ or every component of $H[U]$ contains at least one vertex in $S$.
\begin{clm}The average size of an $(S,H)$-connected set is at least $\abs{H}/2$.
\end{clm}
\begin{poc}We proceed by induction on $\abs{H}$; the case $\abs{H}=1$ is trivial, so assume $\abs{H}>1$. First suppose $S$ consists of a single vertex, $x$. Set $S'=\Gamma(x)$ and $H'=H-x$; note that $S'$ meets every component of $H'$. Thus the average size of an $(S',H')$-connected set is at least $(\abs{H}-1)/2$ by the induction hypothesis. The $(S,H)$-connected sets containing $x$ are precisely the sets obtained by adding $x$ to the $(S',H')$-connected sets, and so these have average size at least $(\abs{H}+1)/2$. There is only one $(S,H)$-connected set not containing $x$ (namely, $\varnothing$), and so the average size of an $(S,H)$-connected set is at least $\frac{k}{k+1}\cdot\frac{\abs{H}+1}{2}$, where $k$ is the number of $(S,H)$-connected sets containing $x$. Every set consisting of a shortest path in $H$ from $x$ to any vertex (including the single-vertex path from $x$ to itself) is $(S,H)$-connected. Consequently $k\geq\abs{H}$, giving $\frac{k}{k+1}\cdot\frac{\abs{H}+1}{2}\geq\frac{\abs{H}}{2}$, as required.

In the case $\abs{S}>1$, we proceed similarly. Fix $x\in S$ and set $S'=\Gamma(x)\cup S\setminus\{x\}$ and $H'=H-x$. As before, the $(S,H)$-connected sets containing $x$ are precisely those obtained by adding $x$ to an $(S',H')$-connected set, and so these have average size at least $(\abs{H}+1)/2$. Set $S^*=S\setminus\{x\}$, and let $H^*$ be the induced subgraph of $H'$ consisting of those components which meet $S^*$. Write $W=V(H)\setminus V(H^*)$. By the induction hypothesis, the average size of an $(S^*,H^*)$-connected set, or equivalently of an $(S,H)$-connected set not containing $x$, is at least $\abs{H^*}/2=(\abs{H}-\abs{W})/2$. Observe that for every $(S,H)$-connected set not containing $x$ there are at least $\abs{W}$ $(S,H)$-connected sets containing $x$, obtained by adding the vertices of a shortest path from $x$ to any vertex in $W$, and all these sets are distinct. Thus, writing $\mathcal{C}_x$ for the collection of $(S,H)$-connected sets containing $x$ and $\mathcal{C}'$ for the collection of those not containing $x$, we have
\begin{align*}
\frac{1}{\abs{\mathcal{C}_x}+\abs{\mathcal{C}'}}\sum_{C\in\mathcal C_x\cup\mathcal C'}\abs{C}&\geq\frac{1}{\abs{\mathcal{C}_x}+\abs{\mathcal{C}'}}\biggl(\abs{\mathcal C_x}\frac{\abs H+1}{2}+\abs{\mathcal C'}\frac{\abs H-\abs W}{2}\biggr)\\
&=\frac{1}{1+\abs{\mathcal C_x}/\abs{\mathcal C'}}\biggl(\frac{\abs{\mathcal C_x}}{\abs{\mathcal C'}}\cdot\frac{\abs H+1}{2}+\frac{\abs{H}-\abs{W}}{2}\biggr).
\end{align*}
Since this is increasing in $\abs{\mathcal C_x}/\abs{\mathcal C'}$, and $\abs{\mathcal C_x}/\abs{\mathcal C'}\geq \abs{W}$, the average size of an $(S,H)$-connected set is at least \[\frac{\abs{W}(\abs{H}+1)/2+(\abs{H}-\abs{W})/2}{1+\abs{W}}=\frac{\abs{H}}{2}.\qedhere\]
\end{poc}
By the claim, the average size of a $(\Gamma(v),G-v)$-connected set is at least $(\abs{G}-1)/2$. Since the connected sets containing $v$ are precisely these sets with $v$ added, they have average size at least $(\abs{G}+1)/2$.
\end{proof}
This is tight when $G$ consists of a spider centred at $v$ (that is, a tree in which every vertex except $v$ has degree at most $2$) together with an arbitrary set of edges between neighbours of $v$.

The second ingredient, which may be of independent interest, shows that we may find a vertex which is in a reasonable proportion of connected sets, but is not a cutvertex.
\begin{lem}\label{tops}Let $G$ be any connected graph on $n\geq 3$ vertices. Then $G$ contains a vertex $v$ such that $G-v$ is connected and $N(G;v)\geq \frac{2N(G)}{n+1}$, with equality if and only if $G$ is a path.\end{lem}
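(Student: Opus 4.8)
The plan is to rephrase the inequality and induct on $n=\abs G$. For any vertex $v$, the connected sets of $G$ avoiding $v$ are precisely the connected sets of $G-v$ (regardless of whether $G-v$ is connected), so $N(G;v)=N(G)-N(G-v)$; hence the target $N(G;v)\geq 2N(G)/(n+1)$ is equivalent to $N(G-v)\leq\frac{n-1}{n+1}N(G)$, and it suffices to exhibit a non-cutvertex $v$ with this bound, strict unless $G$ is a path. Throughout I would use one elementary fact: every connected graph $H$ on $m$ vertices has $N(H)\geq\binom{m+1}{2}$, with equality iff $H$ is a path --- indeed, in any spanning tree the $\binom m2$ paths between pairs of vertices together with the $m$ singletons are already $\binom{m+1}{2}$ distinct subtrees.

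The cases $n\leq 2$ are immediate (and give equality, as $G$ is $P_1$ or $P_2$). For $n\geq 3$ I split on connectivity. If $G$ is $2$-connected, then every vertex is a non-cutvertex, and $\sum_v N(G;v)=\sum_C\abs C\geq 2N(G)-n+1$, since apart from the $n$ singletons every connected set has size $\geq 2$ and at least one (take an edge plus a further neighbour) has size $\geq 3$. Hence the vertex maximising $N(G;v)$ satisfies $N(G;v)\geq(2N(G)-n+1)/n>2N(G)/n-1\geq 2N(G)/(n+1)$, the last step being exactly $N(G)\geq\binom{n+1}{2}$; in particular the inequality is strict, which is correct since a $2$-connected graph is never a path.

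If $G$ has a cutvertex, let $B$ be an end-block with attaching cutvertex $c$, and let $G_2$ be the union of the remaining blocks, so $G=B\cup G_2$, $B\cap G_2=\{c\}$, both connected, with $b=\abs B\geq 2$ and $n_2=\abs{G_2}=n-b+1\geq 2$. Every $v\in B\setminus\{c\}$ is a non-cutvertex of $G$, and splitting a connected set through $v$ according to whether it meets $G_2$ shows $N(G;v)=a_1(v)+a_2(v)N(G_2;c)$, where $a_2(v)$ is the number of connected sets of $B$ containing both $v$ and $c$, and $a_1(v)=N(B;v)-a_2(v)\geq 1$ counts those containing $v$ but not $c$. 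Since $\sum_{v\in B\setminus\{c\}}a_2(v)=\sum_{S\ni c}(\abs S-1)$, Lemma~\ref{local} applied to $B$ at $c$ makes this sum at least $\frac{b-1}{2}N(B;c)$, so some $v$ has $a_2(v)\geq\frac12 N(B;c)$ and thus $N(G;v)\geq 1+\frac12 N(B;c)N(G_2;c)$. Feeding this, the expansion $N(G)=N(B)+N(G_2)-1+(N(B;c)-1)(N(G_2;c)-1)$, and upper bounds of the form $N(B)\leq\frac b2(N(B;c)+1)$, $N(G_2)\leq\frac{n_2}{2}(N(G_2;c)+1)$ into the required inequality $1+\frac12 N(B;c)N(G_2;c)\geq 2N(G)/(n+1)$, it collapses after simplification to $N(B;c)(b-2)(N(G_2;c)-2)+N(G_2;c)(n_2-2)(N(B;c)-2)\geq 0$, which is clear as $N(B;c),N(G_2;c)\geq 2$ and $b,n_2\geq 2$.

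The hard part, to be handled inside the same induction, is to supply the auxiliary upper bounds above: inequalities of shape $N(H)\leq\frac{\abs H}{2}(N(H;c)+1)$ are \emph{false} for a general vertex $c$ (they fail when $c$ is the far end of a long pendant path), so they must be proved exactly in the generality used --- when $c$ is a cutvertex of $H$, or when $H$ is $2$-connected --- which is again done by the above dichotomy. A second, related obstruction is the case $B=K_2$: then $B\setminus\{c\}$ is a single pendant vertex $v$ with $N(G;v)=1+N(G_2;c)$, which can fall short of the target, and one must instead apply the inductive hypothesis to $G_2$ to obtain a non-cutvertex $w\neq c$ of $G_2$ and carry the estimate for $N(G;w)$ back across the pendant edge, once more via Lemma~\ref{local}. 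Finally the equality analysis has to be threaded through every step: each inequality employed is tight only for paths, so one checks that simultaneous equality throughout forces $G$ itself to be a path. I expect this last bookkeeping, together with the pendant/recursion case, to be where most of the work lies.
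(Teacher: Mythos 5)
Your strategy is genuinely different from the paper's, which fixes a diametral path $P=v_0\cdots v_\ell$ and analyses red/blue paths in an auxiliary digraph on the connected sets, showing that one of the endvertices $v_0,v_\ell$ works. Your 2-connected case is correct and pleasantly short: the double-count $\sum_v N(G;v)=\sum_S\abs S\geq 2N(G)-n+1$, together with the classical $N(G)\geq\binom{n+1}{2}$, does give a non-cutvertex with strictly more than $2N(G)/(n+1)$. The decomposition identities in the cutvertex case ($N(G;v)=a_1(v)+a_2(v)N(G_2;c)$ and the expansion of $N(G)$) are also correct, and the averaging via Lemma~\ref{local} to find $v\in B\setminus\{c\}$ with $a_2(v)\geq\tfrac12N(B;c)$ is sound.

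However, there is a genuine gap in the cutvertex case, and it is in exactly the place you flagged as ``the hard part,'' but the fix you sketch does not work. You need $N(G_2)\leq\frac{n_2}{2}(N(G_2;c)+1)$, and you propose to prove it ``exactly in the generality used --- when $c$ is a cutvertex of $H$, or when $H$ is 2-connected.'' That is not the generality actually used: when $B$ is a non-trivial end-block with cutvertex $c$, the vertex $c$ is a cutvertex of $G$ but need not be a cutvertex of $G_2$, and $G_2$ need not be 2-connected. Worse, in that true generality the bound is simply false. Take $G$ to be a triangle $\{a,b,c\}$ joined at $c$ to a path $c\,v_1\,v_2\,v_3\,v_4$ ending in a triangle $\{v_4,d,e\}$, and take $B=\{a,b,c\}$. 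Then $G_2$ has $n_2=7$ vertices, $N(G_2;c)=8$, but $N(G_2)=33>\tfrac72\cdot 9=31.5$. (Here $c$ is a leaf of $G_2$'s block tree, not a cutvertex of $G_2$, and $G_2$ is not 2-connected, so your dichotomy does not apply; and the bound fails.) The overall conclusion of the lemma still holds for this $G$ --- the intermediate step $N(G;v)\geq 1+\tfrac12N(B;c)N(G_2;c)$ has enough slack --- but your specific chain of inequalities breaks, so the reduction to $(b-2)p(q-2)+(n_2-2)q(p-2)\geq 0$ is not justified. You would need a different, correct auxiliary estimate on $N(G_2)$ (or a different way to chain the two sides) before the cutvertex case is complete; as it stands, the key lemma your induction rests on is unproved and in fact false in the required generality. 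The $B=K_2$ sub-case and the equality analysis are also only sketched, but those look more like routine work than structural obstacles.
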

\begin{proof}
Write $\ell=\operatorname{diam}(G)$. It is easy to verify that if $G$ is complete then any vertex $v$ will do, and that if $G$ is a path then either endvertex will do, so we may assume $2\leq\ell\leq n-2$.
Fix two vertices $v_0,v_\ell$ at distance $\ell$, and a shortest path $P$ between them. Write $v_1\cdots v_{\ell-1}$ for the internal vertices of the path. We will think of $P$ as running from left to right, with smaller indices further left. We will prove that either $N(G;v_0)\geq \frac{2N(G)}{n+1}$ or $N(G;v_\ell)\geq \frac{2N(G)}{n+1}$. Since $d(v_0,v_\ell)$ is maximal, neither vertex can be a cutvertex, so this will prove the lemma.

We define a coloured directed multigraph $H$ on the connected sets of $G$, as follows.

Let $S$ be a connected set of $G$. If $d(S,P)\geq 2$ then choose a vertex $x$ with $d(x,S)=1$ and $d(x,P)=d(S,P)-1$. Note that $S\cup\{x\}$ is also connected, since $S$ is connected and $x$ is adjacent to some vertex of $S$. Add two directed edges, one red and one blue, from $S$ to $S\cup\{x\}$. We stress that though any vertex $x$ satisfying the conditions may be chosen, the same vertex is used for both red and blue edges.
If $d(S,P)\leq 1$ and $v_0\not\in S$, let $i$ be minimal such that $d(v_i,S)=1$ and add a red edge from $S$ to $S\cup\{v_i\}$. If $d(S,P)\leq 1$ and $v_\ell\not\in S$, let $j$ be maximal such that $d(v_j,S)=1$ and add a blue edge from $S$ to $S\cup\{v_j\}$.

This construction ensures that in $H$, every vertex corresponding to a connected set not containing $v_0$ has exactly one red outgoing edge, to a connected set with exactly one additional element, whereas every vertex corresponding to a connected set containing $v_0$ has no red outgoing edge. Furthermore, every vertex has at most one incoming red edge: writing $S$ for the corresponding set, if $S\cap P=\varnothing$ then in order to have an incoming red edge there must be a unique $s\in S$ that is closest to $P$, and the only incoming red edge can be from $S\setminus\{s\}$; if $S\cap P\neq \varnothing$ then the only possible incoming red edge is from $S\setminus\{v_a\}$, where $a$ is minimal such that $v_a\in S$. Likewise every vertex corresponding to a set not containing $v_\ell$ has exactly one blue outgoing edge, and every set has at most one blue incoming edge. Consequently the subgraph containing only the red edges is a union of directed paths, each with exactly one vertex (the last vertex of the path) corresponding to a set containing $v_0$, and likewise for the blue subgraph and $v_\ell$. Note that we include some single-vertex paths, where there is a vertex with no incoming or outgoing edge of a particular colour. If there is a vertex not incident with edges of either colour, this counts as two single-vertex paths, one corresponding to each colour. See Figure \ref{fig:red-blue} for an example.

\begin{figure}
\centering
\begin{tikzpicture}[very thick]
\filldraw (1,0) circle (0.05) node[anchor=north] {$v_0$};
\filldraw (2.5,0) circle (0.05) node[anchor=north] {$v_1$};
\filldraw (4,0) circle (0.05) node[anchor=north] {$v_2$};
\filldraw (2.5,2) circle (0.05) node[anchor=south] {$a$};
\filldraw (1.75,1) circle (0.05) node[anchor=east] {$b$};
\filldraw (3.25,1) circle (0.05) node[anchor=west] {$c$};
\draw[thick] (1,0) -- (4,0) -- (2.5,2) -- cycle;
\draw[thick] (1.75,1) -- (2.5,0) -- (3.25,1);
\node at (2.5,-1) {$G$};

\draw[->, red, shorten >=.1cm] (10,3) to[out=-135, in=135] (10,2);
\draw[->>, blue, shorten >=.1cm] (10,3) to[out=-45, in=45] (10,2);
\draw[->, red, shorten >=.1cm] (10,2) to (9,1);
\draw[->, red, shorten >=.1cm] (11,1) to (10,0);
\draw[->, red, shorten >=.1cm] (12,0) to (11,-1);
\draw[->>, blue, shorten >=.1cm] (10,2) to (11,1);
\draw[->>, blue, shorten >=.1cm] (11,1) to (12,0);
\draw[->>, blue, shorten >=.1cm] (9,1) to (10,0);
\draw[->>, blue, shorten >=.1cm] (10,0) to (11,-1);

\filldraw (10,3) circle (0.05) node[anchor=south] {$\{a\}$};
\filldraw (10,2) circle (0.05) node[anchor=east] {$\{a,b\}$\,};
\filldraw (9,1) circle (0.05) node[anchor=east] {$\{a,b,v_0\}$};
\filldraw (11,1) circle (0.05) node[anchor=west] {\,$\{a,b,v_1\}$};
\filldraw (10,0) circle (0.05) node[anchor=east] {$\{a,b,v_0,v_1\}$\;};
\filldraw (12,0) circle (0.05) node[anchor=west] {$\{a,b,v_1,v_2\}$};
\filldraw (11,-1) circle (0.05) node[anchor=north] {$\{a,b,v_0,v_1,v_2\}$};
\end{tikzpicture}
\caption{A graph $G$ with path $P=v_0v_1v_2$ (left) and a component of the auxiliary digraph $H$ (right). For the set $\{a\}$ at distance $2$ from $P$, the vertex $b$ was chosen. Double-headed arrows indicate blue edges. The set $\{a,b,v_0\}$ is a blue top but not a red top.}\label{fig:red-blue}
\end{figure}
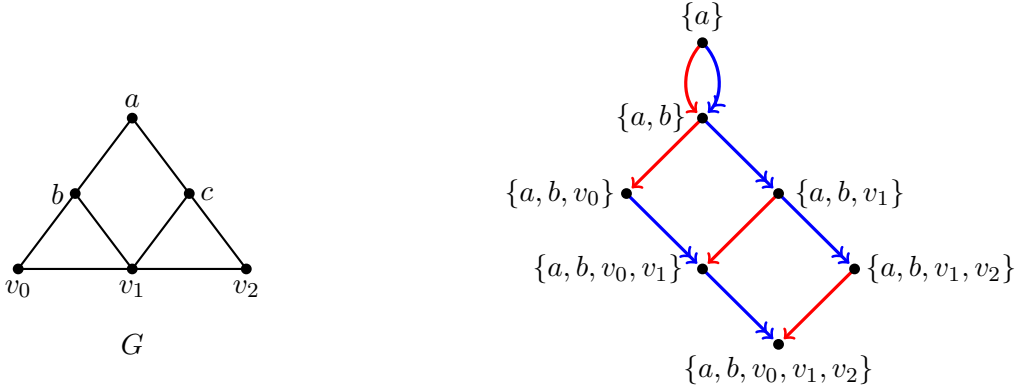

We bound the average length of all these paths (here the length of a path is the number of edges, possibly $0$). We refer to a connected set of $G$ as a ``red top'' (respectively, ``blue top'') if the corresponding vertex in $H$ has no incoming red (respectively, blue) edge. We associate each coloured path with its appropriately-coloured top. Formally, we say that a pair $(S,c)$, where $S\subseteq V(G)$ and $c\in\{\text{red},\text{blue}\}$, is a ``top'' if $S$ is a connected set of $G$ and the corresponding vertex of $H$ has no incoming edge of colour $c$. We write $\mathcal T$ for the set of all such pairs, and for $\tau=(S,c)\in\mathcal T$ we write $\ell(\tau)$ for the length of the path of colour $c$ from the vertex corresponding to $S$ in $H$. For $c\in\{\text{red},\text{blue}\}$, we write $\mathcal{T}_c$ for $\{(S,c')\in\mathcal{T}:c'=c\}$.

We make the following observations regarding which sets are tops.
\begin{clm}\label{clm:tops-meeting-P}Let $S$ be a connected set that meets $P$, and let $a$ be minimal such that $v_a\in S$. Then $S$ is a red top if and only if at least one of the following is satisfied:
\begin{itemize}
\item $S\setminus \{v_a\}$ is not connected; or
\item for some $i<a$, $v_i$ has a neighbour in $S\setminus v_a$.
\end{itemize}
An analogous statement holds for blue tops.\end{clm}
\begin{poc}Suppose $S$ is not a red top. Then $S'$ sends a red edge to $S$ for some connected set $S'$, and we must have $S'=S\setminus\{x\}$ for some $x\in S$. Since $S$ is connected and meets $P$, we have $d(P,S')\leq 1$, and so $S'$ sends a red edge to $S''=S'\cup v_{a'}$, where $a'$ is minimal such that $v_{a'}$ has a neighbour in $S'$. If $x\neq v_a$ or $v_i$ has a neighbour in $S\setminus\{v_a\}$ for some $i<a$, then $a'<a$ and $S''\neq S$, a contradiction. So the second property is satisfied and $x=v_a$, so connectedness of $S'$ gives the first property.

Conversely, if $S\setminus\{v_a\}$ is connected and $v_i$ has no neighbour in $S\setminus\{v_a\}$ for $i<a$ then it sends a red edge to $S$, so $S$ is not a red top.\end{poc}
\begin{clm}\label{clm:high-tops}Let $S$ be a connected set that does not meet $P$. Then $S$ is a red top if and only if it is a blue top.\end{clm}
\begin{poc}Suppose that $S$ is not a red top, and $S'$ sends a red edge to $S$. Since any connected set which meets $P$ or is at distance $1$ from $P$ sends a red edge to a set that meets $P$, we must have $d(S',P)\geq 2$. Since any such set $S'$ has identical red and blue outgoing edges, $S$ is not a blue top. The converse holds similarly.
\end{poc}


We divide $\mathcal T$ into three parts. We say that a top $(S,c)$ is ``high'' if $S\subseteq V(G)\setminus V(P)$, ``low'' if $S\subseteq V(P)$, and ``normal'' otherwise (\ie if $S$ intersects both $V(P)$ and $V(G)\setminus V(P)$). We write $\mathcal{H}$, $\mathcal{L}$ and $\mathcal{N}$ for the sets of high, low and normal tops respectively. 

Let $x$ be any vertex in $V(G)\setminus V(P)$ that has a neighbour in $V(P)$ (since $\ell<n-1$, some such vertex exists). If $x$ has two neighbours on the path, $v_i,v_j$ with $i<j$, then by Claim \ref{clm:tops-meeting-P} $\{x,v_j\}$ is a normal red top and $\{x,v_i\}$ is a normal blue top. If there is a unique $i$ with $xv_i\in E(G)$, then by Claim \ref{clm:tops-meeting-P} $\{x,v_i,v_{i+1}\}$ is a normal red top if $i\neq \ell$ and $\{x,v_i,v_{i-1}\}$ is a normal blue top if $i\neq 0$. Thus $\abs{\mathcal N}>0$. Since any singleton set is a red and a blue top, $\abs{\mathcal{H}}\geq 2(n-\ell-1)>0$, and $\abs{\mathcal L}\geq 2(\ell+1)>0$.

If $\mathcal S$ is a nonempty subset of $\mathcal T$, we write $\mu(\mathcal{S})$ for the average length of paths corresponding to tops in $\mathcal S$, \ie $\mu(\mathcal S)=\frac{1}{\abs{\mathcal S}}\sum_{\tau\in\mathcal S}\ell(\tau)$. Notice that
\begin{equation}\mu(\mathcal T)=\frac{\abs{\mathcal T_\text{red}}\mu(\mathcal T_\text{red})+\abs{\mathcal T_\text{blue}}\mu(\mathcal T_\text{blue})}{\abs{\mathcal T_\text{red}}+\abs{\mathcal T_\text{blue}}}
=\frac{\abs{\mathcal H}\mu(\mathcal H)+\abs{\mathcal N}\mu(\mathcal N)+\abs{\mathcal L}\mu(\mathcal L)}{\abs{\mathcal H}+\abs{\mathcal N}+\abs{\mathcal L}}.\label{breakdown}\end{equation}

We first consider normal tops. This is the most complicated case, since red and blue normal tops do not necessarily coincide. We further divide the normal tops in two stages.

For a given normal top $\tau=(S,c)$, we define the ``residue'' $\res(\tau)$ to be the nonempty set $S\setminus V(P)$; note that $\res(\tau)$ need not be connected. Next, we define the ``interior'' $\intr(\tau)$ as follows. Note that $d(\res(\tau),V(P))=1$. For a set $X$ with $d(X,V(P))=1$, let $i_X$ be the minimal index $i$ such that $v_i$ has a neighbour in $X$, and let $j_X$ be the maximal such index. Now set $\intr(\tau)=S\cap\{v_k:i_{\res(\tau)}<k<j_{\res(\tau)}\}$, which may be empty (and necessarily is empty if $j_{\res(\tau)}-i_{\res(\tau)}\leq 1$). Write 
\[\nxy =\{\tau\in \mathcal{N}:\res(\tau)=X,\intr(\tau)=Y\}.\]
\begin{clm}\label{clm:NXY}Suppose that $\nxy \neq\varnothing$. Then 
$\mu(\nxy )\leq\ell/2$ if $i_X=j_X$ and $\mu(\nxy )\leq(\ell+1)/2$ otherwise.\end{clm}
\begin{poc}Note that if $(S,c)\in\nxy $ then $X\subset S\subseteq X\cup V(P)$, and $S$ is connected. In particular, if $v_a\in S$ for some $a<i_X$ then $S$ contains some shortest path from $v_a$ to $X$, and by definition of $i_X$ and $P$ every such path contains $v_{a+1},\ldots,v_{i_X}$. Thus $S\cap\{v_0,\ldots,v_{i_X}\}$ is either empty or of the form $\{v_a,\ldots,v_{i_X}\}$ for some $a\leq i_X$, and likewise for $S\cap\{v_{j_X},\ldots,v_\ell\}$.

We first prove the claim when $i_X=j_X$ (in which case necessarily $Y=\varnothing$). In this case for any $(S,c)\in\mathcal N_{X,\varnothing}$ we must have $S=X\cup\{v_a,\ldots,v_b\}$ for some $a\leq i_X\leq b$. Note that these sets are either all connected or all disconnected, and so since $\mathcal N_{X,\varnothing}\neq\varnothing$ they are all connected. If $S=X\cup\{v_{i_X}\}$ then either $(S,\text{red}),(S,\text{blue})\in \nxy $ or $(S,\text{red}),(S,\text{blue})\not\in \nxy $, depending on whether $X$ is connected. Otherwise, by Claim \ref{clm:tops-meeting-P}, $(S,\text{red})\in\nxy $ if and only if $S=X\cup\{v_{i_X},\ldots v_b\}$ for some $b>i_X$, and $(S,\text{blue})\in\nxy $ if and only if $S=X\cup\{v_a,\ldots,v_{i_X}\}$ for some $a<i_X$. Note that $\ell((X\cup\{v_{i_X},\ldots v_b\},\text{red}))=i_X$ and $\ell((X\cup\{v_a,\ldots,v_{i_X}\},\text{blue}))=\ell-i_X$.

Consequently, if $X$ is not connected we have
\[\mu(\nxy )=\frac{i_X(\ell-i_X)+(\ell-i_X)i_X}{(\ell-i_X)+i_X}\leq \frac{2(\ell^2/4)}{\ell}=\frac{\ell}{2},\]
by the AM--GM inequality, whereas if $X$ is connected we have
\[\mu(\nxy )=\frac{i_X(\ell-i_X)+(\ell-i_X)i_X+i_X+(\ell-i_X)}{(\ell-i_X)+i_X+2}\leq \frac{2(\ell^2/4)+\ell}{\ell+2}=\frac{\ell}{2}.\]
This completes the proof of the first part of the claim.

From now on we assume $i_X<j_X$, in which case $Y$ might not be empty. The argument is similar, but slightly more complicated. By our earlier remarks, any normal top $S$ with $X(S)=X$ and $Y(S)=Y$ must be of one of the following four possible forms: $X\cup Y$ (only possible if $Y\neq\varnothing$, since a normal top meets $P$); $R_a:=X\cup Y\cup \{v_a,\ldots v_{i_X}\}$ for some $a\leq i_X$; $S_b:=X\cup Y\cup \{v_{j_X},\ldots,v_b\}$ for some $b\geq j_X$; or $T_{a,b}:=X\cup Y\cup \{v_a,\ldots v_{i_X}\}\cup \{v_{j_X},\ldots,v_b\}$ with $a,b$ as before. Further, each set of the form $T_{a,b}$ is necessarily connected, since otherwise there would be no connected set which intersects $V(G)\setminus \{v_0,\ldots,v_{i_X},v_{j_X},\ldots,v_\ell\}$ in $X\cup Y$.

Suppose $S_{j_X}$ is connected. Then $S_b$ is connected for any $b\geq j_X$. By Claim \ref{clm:tops-meeting-P}, $S_b$ is a red top, since $v_{j_X}$ is the leftmost vertex in $S_b\cap V(P)$ but $v_{i_X}$ has a neighbour in $X\subseteq S\setminus\{v_{j_X}\}$. However, again by Claim \ref{clm:tops-meeting-P}, $T_{a,b}$ is not a red top, since $T_{a,b}\setminus\{v_a\}$ is connected. Thus there are $\ell-j_X+1$ red tops in $\nxy$ that contain $v_{j_X}$, and for each we have $\ell(S_b)=i_X+1$.

Alternatively, if $S_{j_X}$ is not connected then neither is $S_{b}$ for any $b\geq j_X$. By Claim \ref{clm:tops-meeting-P}, $T_{i_X,b}$ is a red top for each $b$, but $T_{a,b}$ is not a red top for any $a<i_X$. Thus there are $\ell-j_X+1$ red tops in $\nxy$ that contain $v_{j_X}$, and for each we have $\ell(T_{i_X,b})=i_X$. Similarly, there are exactly $i_X+1$ blue tops which contain $v_{i_X}$, which correspond to paths of length $\ell-j_X$ or $\ell-j_X+1$. 

Write $\nxy'$ for the remaining tops (if any) in $\nxy$, \ie red tops not containing $v_{j_X}$ and blue tops not containing $v_{i_X}$. By Claim \ref{clm:tops-meeting-P}, $R_a$ is not a red top for $a<i_X$ (either it is not connected, or it is connected but so is $R_{a+1}$). Further, $R_{i_X}$ is a red top if and only if it is connected but $X\cup Y$ is not. In particular, at most one of $R_{i_X}$ and $X\cup Y$ is a red top. Similarly the only possible blue tops in $\nxy'$ are $S_{j_X}$ and $X\cup Y$, with at most one of these being a blue top. Therefore $\abs{\nxy'}\leq 2$. Note that each potential red top $\tau\in\nxy'$ satisfies $\ell(\tau)\leq i_X+1$, and each potential blue top satisfies $\ell(\tau)\leq \ell-j_X+1$.

If $\abs{\nxy'}=0$, we have
\[\mu(\nxy)\leq\frac{(\ell-j_X+1)(i_X+1)+(i_X+1)(\ell-j_X+1)}{(\ell-j_X+1)+(i_X+1)}\leq\frac{\ell+i_X-j_X+2}{2}\leq\frac{\ell+1}{2},\]
using AM--GM and the fact that $j_X>i_X$. If $\abs{\nxy'}=2$, we likewise have
\begin{align*}\mu(\nxy)&\leq\frac{(\ell-j_X+1)(i_X+1)+(i_X+1)(\ell-j_X+1)+\ell-j_X+i_X+2}{(\ell-j_X+1)+(i_X+1)+2}\\
&\leq\frac{(\ell+i_X-j_X+2)^2/2+\ell+i_X-j_X+2}{\ell+i_X-j_X+4}\leq\frac{\ell+1}{2}.\end{align*}
Finally, if $\abs{\nxy'}=1$ then, by Claim \ref{clm:high-tops}, either $\nxy'=\{(R_{i_X},\text{red})\}$ or $\nxy'=\{(S_{j_X},\text{blue})\}$; assume without loss of generality the former. Since $\ell((R_{i_X},\text{red}))=i_X$, we have
\begin{align*}\frac{2(\ell-j_X+1)(i_X+1)+i_X}{\ell-j_X+i_X+3}&=\frac{2(\ell-j_X+3/2)(i_X+1)-1}{\ell-j_X+i_X+3}\\
&\leq\frac{(\ell-j_X+i_X+5/2)^2-2}{2(\ell-j_X+i_X+5/2)+1}\\
&=\frac{(2(\ell-j_X+i_X+5/2)+1)((\ell-j_X+i_X+5/2)/2-1/4)-7/4}{2(\ell-j_X+i_X+5/2)+1}\\
&<(\ell-j_X+i_X+5/2)/2-1/4\\
&\leq(\ell+1)/2,
\end{align*}
again using AM--GM and $i_X<j_X$. This completes the proof of the claim.\end{poc}

We next combine the high and normal tops. By Claim \ref{clm:high-tops}, if $(T,c)\in\mathcal H$ then both $(T,\text{red})$ and $(T,\text{blue})$ are in $\mathcal H$. Furthermore, the red and blue paths from $T$ both include some connected set $X$ at distance $1$ from $P$ (where possibly $X=T$), since the paths coincide at least until reaching $X$. We refer to this set as the ``extension'' $\ext((T,c))$. Note that every connected set $X$ satisfying $d(X,V(P))=1$ is the extension of exactly two high tops, since it lies on one path of each colour.

Since the distance reduces at each step, and $d(T,V(P))\leq n-\abs{V(P)}$, the length of the path from $T$ to $X$ is at most $n-\ell-2$. Now the red path proceeds through $X\cup\{v_{i_X}\}$, $X\cup\{v_{i_X},v_{i_X-1}\}$, and so on down to $X\cup\{v_{i_X},v_{i_X-1},\ldots,v_0\}$, so $\ell((T,\text{red}))\leq n-\ell-1+i_X$. Similarly, $\ell((T,\text{blue}))\leq n-\ell-1+(\ell-j_X)=n-1-j_X$. 

Write $\mathcal{X}_1=\{\ext(\tau)\mid \tau\in \mathcal H\}$ and $\mathcal{X}_2=\{\res(\tau)\mid\tau\in\mathcal N\}\setminus\mathcal X_1$. For each $X\in\mathcal X_1\cup X_2$ set $\mathcal{N}_X=\{\tau\in\mathcal N:\res(\tau)=X\}$. Note that any $X\in\mathcal X_1$ is connected and thus, by Claim \ref{clm:tops-meeting-P}, if $i_X=j_X$ we have $(X\cup\{v_a,\ldots,v_{i_X}\},\text{blue})\in\mathcal N_X$ for each $a<i_X$ and $(X\cup\{v_{i_X},\ldots,v_b\},\text{red})\in\mathcal N_X$ for each $b>i_X$, whereas if $i_X<j_X$ then $(X\cup\{v_{i_X}\},\text{blue}),(X\cup\{v_{j_X}\},\text{red})\in \mathcal N_X$. Thus $\abs{\mathcal N_X}\geq 2$ for each $X\in\mathcal X_1$.
For each $X\in\mathcal X_1\cup\mathcal X_2$, we have $\mathcal{N}_X=\bigcup_Y\mathcal N_{X,Y}$, where the union is taken over all $Y$ with $\mathcal N_{X,Y}$ nonempty. Thus, by Claim \ref{clm:NXY} and averaging, $\mu(\mathcal N_X)\leq (\ell+\iij)/2$,
where $\mathbb{I}_\cdot$ is the indicator function.

For each $X\in\mathcal X_1$, write $\mathcal C_X=\mathcal N_X\cup\{\tau\in\mathcal H:\ext(\tau)=X\}$. By the remarks above, we have
\begin{align*}\mu(\mathcal C_X)&\leq\frac{\abs{\mathcal N_X}\mu(\mathcal N_X)+(n-\ell-1+i_X)+(n-1-j_X)}{\abs{\mathcal N_X}+2}\\
&\leq\frac{\abs{\mathcal N_X}\frac{\ell+\iij}2+2\Bigl(n-1-\frac{\ell+\iij}{2}\Bigr)}{\abs{\mathcal N_X}+2}\\
&=\frac{n-1}2+\frac{\Bigl(\frac{n-1-\ell-\iij}{2}\Bigr)(2-\abs{\mathcal N_X})}{\abs{\mathcal N_X}+2}\leq\frac{n-1}2,\end{align*}
since $n-\ell-2\geq 0$ and $\abs{\mathcal N_X}\geq 2$.

Additionally, for each $X\in\mathcal X_2$ we have $\mu(\mathcal N_X)\leq \frac{\ell+\iij}{2}\leq\frac{n-1}{2}$. Since we have
\[\mathcal H\cup\mathcal N=\bigcup_{X\in\mathcal X_1}\mathcal C_X\cup\bigcup_{X\in\mathcal X_2}\mathcal N_X,\]
by averaging we obtain $\mu(\mathcal H\cup\mathcal N)\leq \frac{n-1}{2}$.

The only remaining tops are the low tops. Since $P$ is a shortest path, any connected set contained in $P$ is an interval, and is a top if and only if it is a singleton, so \[\mathcal{L}=\{(\{v_i\},\text{red}),(\{v_i\},\text{blue})\mid 0\leq i\leq \ell\}.\] 
Note that $\ell((\{v_i\},\text{red}))=i$ and $\ell((\{v_i\},\text{blue}))=\ell-i$, so 
\[\mu(\mathcal L)=\frac{(\ell+1)\ell}{2(\ell+1)}=\frac{\ell}{2}<\frac{n-1}{2}.\]
Consequently, since $\abs{\mathcal L}>0$, $\mu(\mathcal H\cup \mathcal N)\leq (n-1)/2$ and $\mu(\mathcal L)<(n-1)/2$, \eqref{breakdown} gives
\begin{equation}\mu(\mathcal T)=\frac{(\abs{\mathcal H}+\abs{\mathcal N})\mu(\mathcal H\cup\mathcal N)+\abs{\mathcal L}\mu(\mathcal L)}{\abs{\mathcal H}+\abs{\mathcal N}+\abs{\mathcal L}}<\frac{n-1}{2}\label{all-tops}.
\end{equation}

We can now complete the proof. Note that $\sum_{T\in \mathcal{T}_{\text{red}}}\ell(T)=\abs{H}-\abs{\mathcal{T}_{\text{red}}}$, since the red paths form a spanning forest with $\abs{\mathcal T_{\text{red}}}$ components. Also, $\abs{H}=N(G)$, and each red path contains exactly one connected set containing $v_0$, so $\abs{\mathcal T_{\text{red}}}=N(G;v_0)$. Thus $\mu(\mathcal T_{\text{red}})={N(G)}/{N(G;v_0)}-1$, and similarly $\mu(\mathcal T_{\text{blue}})={N(G)}/{N(G;v_\ell)}-1$.
Suppose both $N(G;v_0)/N(G)\leq 2/(n+1)$ and $N(G;v_\ell)/N(G)\leq 2/(n+1)$. Then $\mu(\mathcal T_{\text{red}}),\mu(\mathcal T_{\text{blue}})\geq (n-1)/2$, and \eqref{breakdown} implies $\mu(\mathcal T)\geq (n-1)/2,$
contradicting \eqref{all-tops}. Thus we must have $N(G;v_0)/N(G)> 2/(n+1)$ or $N(G;v_\ell)/N(G)> 2/(n+1)$, as required.
\end{proof}
We are now ready to prove our main result.
\begin{proof}[Proof of Theorem \ref{main}]
We proceed by induction on $n$; the case $n=2$ is trivial. If $n\geq 3$ then we use Lemma \ref{tops} to choose a vertex $v$ with $G-v$ connected and $N(G;v)\geq \frac{2}{n+1}N(G)$, with strict inequality if $G$ is not a path. Note that $N(G)=N(G;v)+N(G-v)$, since a connected set of $G$ which does not contain $v$ is a connected set of $G-v$ and vice versa. By Lemma \ref{local}, we have $A(G;v)\geq (n+1)/2$. By the induction hypothesis, we have $A(G-v)\geq (n+1)/3$. Now
\begin{align*}A(G)&=\frac{N(G;v)A(G;v)+(N(G)-N(G;v))A(G-v)}{N(G)}\\
&\geq\frac{N(G;v)\frac{n+1}{2}+(N(G)-N(G;v))\frac{n+1}{3}}{N(G)}\\
&=\frac{n+1}{3}+\frac{N(G;v)}{N(G)}\cdot\frac{n+1}{6}\\
&\geq\frac{n+1}{3}+\frac{2}{n+1}\cdot\frac{n+1}{6}=\frac{n+2}{3},
\end{align*}
with the final inequality being strict if $G$ was not a path.
\end{proof}
\section{Final remarks}
Vince \cite{Vin21} conjectured that for graphs with minimum degree at least $3$, the average order of a connected set is at least half the order of the original graph. This may be thought of as an analogue of the result of Vince and Wang \cite{VW10} for series-reduced trees (although the latter will have some vertices of degree $1$), and, if true, would be best possible since the complete graph $K_n$ has $A(K_n)=n/2+o(1)$. It seems difficult to approach this conjecture using these methods. For the case of series-reduced trees the bound follows from the equivalent of Lemma \ref{local} together with the observation that any sufficiently large series-reduced tree $T$ has a vertex $v$ with $N(T;v)\geq\frac{n}{n+1}N(T)$. However, in the case of graphs with minimum degree $3$, or even of $3$-regular graphs, there are examples where $N(G;v)/N(G)$ is bounded away from $1$ (uniformly in $n$) for every vertex $v$. In fact, any vertex-transitive cubic graph is an example. To see this, note that $A(G)=\sum_{v\in V(G)}N(G;v)/N(G)$, and so any vertex-transitive graph $G$ satisfies $N(G;v)/N(G)=A(G)/\abs{G}$ for every vertex $v$; additionally, any cubic graph $G$ satisfies $A(G)/\abs{G}<0.95831$ by a recent result of the author \cite[Theorem 3.4]{Has21}. Thus we would require a bound on the average size of connected sets not containing $v$ which is very close to $n/2$, but $G-v$ does not have the same bound on its minimum degree. This remains an intriguing conjecture.
\section*{Acknowledgements}
This research was supported by the UK Research and Innovation Future Leaders Fellowship MR/S016325/1.

\end{document}